\documentclass[11pt]{article}
\usepackage{mathrsfs}
\usepackage{amsmath}
\usepackage{amsfonts}
\usepackage{amsmath,amsthm}
\usepackage{amsmath,amssymb,amsthm,latexsym}
\usepackage{amscd}
\usepackage[all]{xy}
\usepackage{hyperref}

\textwidth 140mm \textheight 240mm \topmargin  -20mm
\newtheorem{theorem}{Theorem}[section]

\newtheorem{definition}[theorem]{Definition}

\newtheorem{lemma}[theorem]{Lemma}

\theoremstyle{remark}
\newtheorem{remark}[theorem]{Remark}
\def\o{\omega}
\def\<{\langle}
\def\>{\rangle}

\begin{document}
\title{\bf{Wintgen ideal submanifolds of codimension two, complex curves, and M\"obius geometry}}
\author {Tongzhu Li, Xiang Ma, Changping Wang, Zhenxiao Xie}
\maketitle

\begin{abstract}
Wintgen ideal submanifolds in space forms are those ones
attaining the equality pointwise in the so-called DDVV inequality which relates the scalar curvature, the mean curvature and the scalar normal curvature. Using the framework of M\"obius geometry, we show that in the codimension two case ($M^m\to\mathbb{S}^{m+2}$), the mean curvature sphere of the Wintgen ideal submanifold corresponds to an 1-isotropic holomorphic curve in a complex quadric $Q^{m+2}_+$.
Conversely, any 1-isotropic complex curve in $Q^{m+2}_+$ describes a 2-parameter family of $m$-dimensional spheres whose envelope is always a $m$-dimensional Wintgen ideal submanifold at the regular points. The relationship with Dajczer and Tojeiro's work on the same topic as well as the description in terms of minimal surfaces in the Euclidean space is also discussed.
\end{abstract}

\hspace{2mm}

{\bf Keywords:}  Wintgen ideal submanifolds, DDVV inequality, M\"obius geometry, mean curvature sphere, Gauss map, holomorphic 1-isotropic curves, minimal surfaces\\

{\bf MSC(2000):\hspace{2mm} 53A10, 53C42, 53C45}

\section{Introduction}

A remarkable result in the submanifold theory in real space forms is the so-called DDVV inequality, which relates the most important intrinsic and extrinsic quantities at an arbitrary point of a submanifold (like the scalar curvature, the mean curvature, and the scalar normal curvature), without any restriction on the dimension/codimension or any further geometric/topological assumptions. This universal inequality was a difficult conjecture in \cite{DDVV1, DDVV}, and was finally proved in \cite{Ge} and \cite{Lu3}.

It is very interesting to characterize the equality case in the DDVV inequality. By the suggestion of \cite{chen10,ml} and the characterization of \cite{Ge} about the equality case at an arbitrary point, we introduce the following definition.

\begin{definition}
A submanifold $M^m$ of dimension $m$ and codimension $p$ in a real space form is called a \emph{Wintgen ideal submanifold} if the equality is attained at every point of $M^m$. This happens if, and only if, at every point $x\in M$ there exists an orthonormal basis $\{e_1,\cdots,e_m\}$ of the tangent plane $T_xM^m$ and an orthonormal basis $\{n_1,\cdots,n_p\}$ of the normal plane $T_x^{\bot}M^m$,
such that the shape operators $\{A_{n_i},i=1,\cdots,p\}$ take the form as below \cite{Ge}:
\begin{equation}\label{form1}
A_{n_1}=
\begin{pmatrix}
\lambda_1 & \mu_0 & 0 & \cdots & 0\\
\mu_0 & \lambda_1 & 0 & \cdots & 0\\
0  & 0 & \lambda_1 & \cdots & 0\\
\vdots & \vdots & \vdots & \ddots & \vdots\\
0  & 0 & 0 & \cdots & \lambda_1
\end{pmatrix},
A_{n_2}=
\begin{pmatrix}
\lambda_2+\mu_0 & 0 & 0 & \cdots & 0\\
0 & \lambda_2-\mu_0 & 0 & \cdots & 0\\
0  & 0 & \lambda_2 & \cdots & 0\\
\vdots & \vdots & \vdots & \ddots & \vdots\\
0  & 0 & 0 & \cdots & \lambda_2
\end{pmatrix},
\end{equation}
$$A_{n_3}=\lambda_3I_p,~~~~ A_{n_r}=0, r\ge 4.$$
Note that the distribution $\mathbb{D}=Span\{e_1,e_2\}$ is well-defined when the submanifold is umbilic-free. This is called \emph{the canonical distribution}.
\end{definition}

Wintgen ideal submanifolds are abundant. Wintgen \cite{wint} first proved the DDVV inequality for surfaces in $\mathbb{S}^4$. When the equality is attained everywhere, such surfaces are called super-conformal, which means that the curvature ellipse is a circle, or equivalently, the Hopf differential is an isotropic differential form. For more examples see \cite{br,Dajczer1,Dajczer2,Dajczer3,Lu,XLMW}. Generally a Wintgen ideal submanifold is not necessarily minimal; on the other hand, it is noteworthy that many important examples appearing in the partial classification results above come from holomorphic curves or minimal surfaces/submanifolds.

An important observation by Dajczer and Tojeiro \cite{Dajczer3} (based on the result in \cite{DDVV}) is that, the DDVV inequality, as well as the equality case, are invariant under M\"{o}bius transformations of the ambient space. So it is clear that the most suitable framework for the study of Wintgen ideal submanifolds is M\"{o}bius geometry.

This research program has been carried out by us in \cite{LiTZ2} and \cite{XLMW}. In \cite{LiTZ2}, we show that when the canonical distribution $\mathbb{D}$ generates a comparatively lower dimensional integrable distribution, a Wintgen ideal submanifold is a cylinder, a cone, or a rotational submanifold over a minimal Wintgen ideal submanifold in $\mathbb{R}^n, S^n$ or $H^n$, respectively. In \cite{XLMW}, when the dimension $m=3$ and the codimension $p=2$, we show that $M^3$ has a circle bundle structure over a Riemann surface among other results. Observe that the sphere bundle structure manifests itself in both cases.

In this paper we concentrate on the codimension two case and consider an important M\"obius invariant object associated with a submanifold $M$, the so-called \emph{mean curvature sphere}.
At each point $x\in M^m$, it is the unique $m$-dimensional round sphere tangent to $M^m$ at $x$ which also shares the same mean curvature vector with $M^m$ at $x$. In the codimension two case,
this assigns a (oriented) space-like 2-space $\mathrm{Span}_{\mathbb{R}}\{\xi_1,\xi_2\}$ in the Lorentz space $\mathbb{L}^{m+4}$, which is also identified with
the isotropic complex line $\mathrm{Span}_{\mathbb{C}}\{\xi_{1}-i\xi_{2}\}\in\mathbb{C}P^{m+3}$
(with respect to the $\mathbb{C}$-linear extension of the Lorentz metric). When the base point $x$ varies along $M^m$, we obtain the mean curvature sphere congruence, which is also represented as a Gauss map
\[
[\xi]\triangleq[\xi_{1}-i\xi_{2}]: M^m\to Q^{m+2}_+=\{[Z]\in \mathbb{C}P^{m+3}| \langle Z,Z\rangle =0,~\langle Z,\bar{Z} \rangle>0\}.
\]
It is similar to the generalized Gauss map of a minimal surface in Euclidean space and the conformal Gauss map of a (Willmore) surface \cite{br0}.

The first key observation by us (see also \cite{XLMW}) is that under the hypothesis of being Wintgen ideal, this $m$-sphere congruence is indeed a 2-parameter family, and its envelope not only recovers $M^{m}$, but also extends it to a submanifold as a sphere bundle over a Riemann surface $\overline{M}$ (a holomorphic curve).
The underlying surface $\overline{M}$ comes from
the quotient surface $\overline{M}=M^{m}/\Gamma$ (at least locally) where $\Gamma$ is the foliation of $M^{m}$ by the integral submanifolds of the distribution $\mathbb{D}^{\bot}=\mathrm{Span}\{e_3,\cdots,e_m\}$. Moreover,
the mean curvature sphere $[\xi_1-i\xi_2]$ indeed determines a
holomorphic, 1-isotropic curve in $Q^{m+2}_+$, and all codimension two Wintgen ideal submanifolds can be constructed by such curves in $Q^{m+2}_+$. The precise statement of our main result is as below.

\begin{theorem}
The mean curvature spheres $[\xi]\triangleq[\xi_{1}-i\xi_{2}] \in Q^{m+2}_+$ of a Wintgen ideal submanifold of codimension two is a holomorphic and 1-isotropic curve, i.e.,
\[
\xi_{\bar{z}}~\parallel~\xi, ~~\<\xi_z,\xi_z\>=0.
\]
Conversely, given a holomorphic isotropic curve
\[
[\xi]:\overline{M}\to Q^{m+2}_+\subset\mathbb{C}P^{m+3},
\]
the envelope $\widehat{M}^m$ of the corresponding 2-parameter family spheres is a
$m$-dimensional Wintgen ideal submanifold (at the regular points).
\end{theorem}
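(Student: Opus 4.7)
My plan is to work throughout in the Möbius framework: embed $\mathbb{S}^{m+2}$ in the projectivized light cone of the Lorentz space $\mathbb{L}^{m+4}$, lift $M^m$ to a canonical isotropic section $Y$, and adjoin vectors representing the tangent directions and the two unit normals so that $\{Y, Y_*, N, e_1,\ldots,e_m, \xi_1, \xi_2\}$ becomes an adapted Möbius frame. In this setup the mean curvature sphere at a point is $\mathrm{Span}_{\mathbb{R}}\{\xi_1,\xi_2\}$, and its structure equations can be read off directly from the Möbius second fundamental form, which, under the Wintgen ideal hypothesis and after the conformal normalization that turns Euclidean shape operators into their Möbius counterparts, retains the normal form (\ref{form1}).

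For the direct implication, the first step is to show that $e_j(\xi) \in \mathbb{C}\xi$ for every $j\geq 3$, so that the mean curvature sphere congruence descends to a genuine $2$-parameter family on the Riemann surface $\overline{M} = M^m/\Gamma$ produced by the integrable distribution $\mathbb{D}^{\perp}$. This follows from the block structure of (\ref{form1}): on vectors $e_j \in \mathbb{D}^{\perp}$ both $A_{n_1}$ and $A_{n_2}$ act as scalars ($\lambda_1$ and $\lambda_2$ respectively), so the variation of the normal $2$-plane in $\mathbb{L}^{m+4}$ along $e_j$ is pure rotation in the $(\xi_1,\xi_2)$-plane, which is absorbed by the complex combination $\xi = \xi_1 - i\xi_2$. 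With descent established, I introduce a local isothermal coordinate $z$ on $\overline{M}$ so that $\partial_z$ lifts to a null combination of the form $\tfrac{1}{2}(e_1 \mp i e_2)$ in $\mathbb{D}$, and then compute $\xi_{\bar z}$ and $\xi_z$ directly from (\ref{form1}). The off-diagonal entries $\mu_0$, together with the matching diagonal patterns of $A_{n_1}$ and $A_{n_2}$ on $\mathbb{D}$, are precisely what makes $\xi_{\bar z}$ collapse into $\mathbb{C}\xi$ (holomorphicity) and forces $\langle \xi_z, \xi_z\rangle = 0$ (1-isotropy); both become short algebraic checks once the frame is fixed.

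For the converse, I begin from a holomorphic 1-isotropic $[\xi]: \overline{M}\to Q^{m+2}_+$, fix a local lift and split it as $\xi = \xi_1 - i\xi_2$ with $\xi_1, \xi_2$ orthonormal spacelike, and associate the $2$-parameter family of $m$-spheres $\Sigma_{[\xi]} := \{[\hat x] \in \mathbb{S}^{m+2} : \langle \hat x, \xi_1\rangle = \langle \hat x, \xi_2\rangle = 0\}$. The envelope $\widehat{M}^m$ is cut out by these two incidence conditions together with the infinitesimal envelope condition $\langle \hat x, \xi_z\rangle = 0$ (the $\bar z$-condition is automatic since $\xi_{\bar z}\parallel\xi$ kills the projection onto the light-cone slice). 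By 1-isotropy the complex vector $\xi_z$ contributes only two further real scalar constraints on $\hat x$, so generically a $(m-2)$-dimensional fiber over $\overline{M}$ survives, giving the advertised $m$-dimensional envelope. I then verify that $\mathrm{Span}\{\xi_1,\xi_2\}$ is indeed the mean curvature sphere of $\widehat{M}^m$, and that a suitable orthonormal frame on $\widehat{M}^m$ puts its shape operators into the form (\ref{form1}); this amounts to reversing the structure-equation computation of the direct part.

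The main obstacle will be the converse: controlling regularity of the envelope and producing by hand a tangent frame $\{e_1,\ldots,e_m\}$ together with a normal frame $\{n_1,n_2\}$ that realises (\ref{form1}). A careful Möbius gauge choice --- rescaling $\xi$ so that $\langle \xi_z,\xi_{\bar z}\rangle$ is normalised and absorbing derivatives of the scale factor into a connection adjustment --- should translate the holomorphic and isotropic conditions on $[\xi]$ cleanly into the algebraic identities linking $\lambda_1, \lambda_2, \lambda_3$ and $\mu_0$, which is what the Wintgen ideal condition demands.
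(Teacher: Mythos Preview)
Your outline for the direct implication matches the paper's approach closely: the paper derives the key formula
\[
d(\xi_1-i\xi_2)=i\mu(\omega_1+i\omega_2)(\eta_1+i\eta_2)+i\theta_{12}(\xi_1-i\xi_2)
\]
from the structure equations, and from this single formula both the descent to the quotient surface $\overline{M}$, the holomorphicity, and the $1$-isotropy follow at once. Your plan to check $e_j(\xi)\in\mathbb{C}\xi$ for $j\ge 3$ and then compute $\xi_z,\xi_{\bar z}$ is exactly this.

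For the converse you have the right architecture---parametrise the envelope as a sphere bundle over $\overline{M}$, show $\{\xi_1,\xi_2\}$ is still the mean curvature sphere of the envelope, and then read off the Wintgen form of the second fundamental form---but you are underestimating the middle step. It is \emph{not} simply a reversal of the direct computation: in the direct part $\xi$ is the mean curvature sphere by definition, whereas here you only know $\xi_1,\xi_2$ are normal to the envelope (by the envelope condition), and must prove that the second fundamental form with respect to them is trace-free. The paper isolates this as the ``crucial'' Claim and proves it by a Laplacian argument: one must show $\langle \hat Y,\sum_j \hat E_j\hat E_j(\xi_1-i\xi_2)\rangle=0$, and this hinges on the prior observation that the dual $1$-forms $\hat\omega_1+i\hat\omega_2$ on the envelope are of type $(1,0)$ relative to the pulled-back complex structure, i.e.\ $\hat\omega_1+i\hat\omega_2=(\hat F-i\hat G)(\theta_1+i\theta_2)$ for suitable functions $\hat F,\hat G$. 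Only with this in hand does holomorphicity plus $1$-isotropy of $[\xi]$ force the Laplacian term to vanish modulo $\hat Y^{\perp}$. Your final paragraph about gauge-fixing $\langle\xi_z,\xi_{\bar z}\rangle$ and extracting relations among $\lambda_1,\lambda_2,\lambda_3,\mu_0$ is beside the point: once the mean-curvature-sphere claim is established, the M\"obius second fundamental form is read off directly from $d\xi_1,d\xi_2$ and automatically has the required shape, with no further algebraic identities to verify.
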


This paper is organized as below. In Section~2 we give a brief review of the submanifold theory in M\"obius geometry. In Section~3 we restrict to consider Wintgen ideal submanifold of codimension two. The M\"obius invariants now take a much simpler expression.

As the core of this paper, in Section~4 we show that the mean curvature sphere congruence of a codimension two Wintgen ideal submanifold defines a 1-isotropic holomorphic curve in $Q^{m+2}_+$, and in Section~5 the converse is also proved.
The geometry of such curves in $Q^{m+2}_+$ is also briefly explained in Section~5. These two parts finish the proof to the main theorem mentioned above.

It should be noted that Dajczer and Tojeiro already gave another description of Wintgen ideal submanifolds of codimension two in \cite{Dajczer3} via minimal surfaces in $\mathbb{R}^{m+2}$.
Their construction is compared with ours in Section~6.
Indeed, these two descriptions are equivalent by a correspondence between holomorphic, 1-isotropic curves in $Q^{m+2}_+\subset\mathbb{C}P^{m+3}_1$ (the \emph{Gauss map} of $M^m$) and those ones in $\mathbb{C}^{m+2}$ (the generalized Gauss map of minimal surfaces in $\mathbb{R}^{m+2}$). This comes from a correspondence between $Q^{m+2}$ and $\mathbb{C}^{m+2}$ which could be regarded as a complex version of the classical stereographic projection.

In particular, we give a new proof to the following fact: When the ambient space is endowed with the Euclidean flat metric, the centers of the mean curvature spheres of a codimension two Wintgen ideal submanifold constitutes a minimal surface in this Euclidean space. This beautiful result was first obtained by Rouxel \cite{Rouxel} for superconformal surfaces (i.e., Wintgen ideal surfaces) in $\mathbb{R}^4$, then re-discovered and generalized by Dajczer and Tojeiro \cite{Dajczer2, Dajczer3} for arbitrary codimensional case.\\

\textbf{Acknowledgement} This work is funded by the Project 10901006 and 11171004 of National Natural Science Foundation of China.

\section{Basic invariants and equations for submanifolds in M\"obius geometry}

In this section we briefly review the theory of submanifolds
in M\"obius geometry. For details we refer to $\cite{CPWang}$.

In the classical light-cone model, the light-like directions in the Lorentz space $\mathbb{R}^{m+p+2}_1$ correspond to points in the round sphere $\mathbb{S}^{m+p}$, and the Lorentz orthogonal group correspond to conformal transformation group of $\mathbb{S}^{m+p}$. The Lorentz inner product between
$Y=(Y_0,Y_1,\cdots,Y_{m+p+1}), Z=(Z_0,Z_1,\cdots,Z_{m+p+1})\in
\mathbb{R}^{m+p+2}_1$ is
\[
\langle  Y,Z\rangle=-Y_0Z_0+Y_1Z_1+\cdots+Y_{m+p+1}Z_{m+p+1}.
\]

Let $x:M^m\rightarrow \mathbb{S}^{m+p}\subset \mathbb{R}^{m+p+1}$ be a submanifold without umbilics. Take $\{e_i|1\le i\le m\}$ as the tangent frame with respect to the induced metric $I=dx\cdot dx$, and $\{\theta_i\}$ as the dual 1-forms.
Let $\{n_{r}|1\le r\le p\}$ be orthonormal frame for the
normal bundle. The second fundamental form and
the mean curvature of $x$ are
\begin{equation}\label{2.1}
II=\sum_{ij,\gamma}h^{r}_{ij}\theta_i\otimes\theta_j
n_{r},~~H=\frac{1}{m}\sum_{j,r}h^{r}_{jj}n_{r}=\sum_{r}H^{r}n_{r},
\end{equation}
respectively. We define the M\"{o}bius position vector $Y:
M^m\rightarrow \mathbb{R}^{m+p+2}_1$ of $x$ by
\begin{equation}\label{2.2}
Y=\rho(1,x),~~~
~~\rho^2=\frac{m}{m-1}\left|II-\frac{1}{m} tr(II)I\right|^2
\end{equation}
which is also called the canonical lift of $x$ \cite{CPWang}.
Two submanifolds $x,\bar{x}: M^m\rightarrow \mathbb{S}^{m+p}$
are M\"{o}bius equivalent if there exists $T$ in the Lorentz group
$O(m+p+1,1)$ in $\mathbb{R}^{m+p+2}_1$ such that $\bar{Y}=YT.$
It follows immediately that
\begin{equation}
\mathrm{g}=\langle dY,dY\rangle=\rho^2 dx\cdot dx
\end{equation}
is a M\"{o}bius invariant, called the M\"{o}bius metric of $x$.

Let $\Delta$ be the Laplacian with respect to $\mathrm{g}$. Define
\begin{equation}
N=-\frac{1}{m}\Delta Y-\frac{1}{2m^2}
\langle \Delta Y,\Delta Y\rangle Y,
\end{equation}
which satisfies
\[
\langle Y,Y\rangle=0=\langle N,N\rangle, ~~
\langle N,Y\rangle=1~.
\]
Let $\{E_1,\cdots,E_m\}$ be a local orthonormal frame for $(M^m,\mathrm{g})$
with dual 1-forms $\{\omega_1,\cdots,\omega_m\}$. Write
$Y_j=E_j(Y)$. Then we have
\[
\langle  Y_j,Y\rangle =\langle  Y_j,N\rangle =0, ~\langle  Y_j,Y_k\rangle =\delta_{jk}, ~~1\leq j,k\leq m.
\]
We define
\[
\xi_r=(H^r,n_r+H^r x).
\]
Then $\{\xi_{1},\cdots,\xi_p\}$ form the orthonormal frame of the
orthogonal complement of $\mathrm{Span}\{Y,N,Y_j|1\le j\le m\}$.
And $\{Y,N,Y_j,\xi_{r}\}$ form a moving frame in $\mathbb{R}^{m+p+2}_1$ along $M^m$.
\begin{remark}\label{rem-xi}
Geometrically, at one point $x$, $\xi_r$ (for any given $r$) corresponds to the unique hyper-sphere tangent to $M_m$ with normal vector $n_r$ and mean curvature $H^r(x)$. In particular, the spacelike subspace $\mathrm{Span}_{\mathbb{R}}\{\xi_1,\cdots,\xi_p\}$ represents a unique $m$-dimensional sphere tangent to $M_m$ with the same mean curvature vector $\sum_r H^r n_r$. This well-defined object was naturally named \emph{the mean curvature sphere} of $M^m$ at $x$. Note that it still share the same mean curvature at $x$ even when the ambient space is endowed with any other conformal metric.
\end{remark}
We fix the range of indices in this section as below: $1\leq
i,j,k\leq m; 1\leq r,s\leq p$. The structure equations are:
\begin{equation}\label{eq-structure}
\begin{split}
&dY=\sum_i \omega_i Y_i,\\
&dN=\sum_{ij}A_{ij}\omega_i Y_j+\sum_{i,r} C^r_i\omega_i \xi_{r},\\
&d Y_i=-\sum_j A_{ij}\omega_j Y-\omega_i N+\sum_j\omega_{ij}Y_j
+\sum_{j,r} B^{r}_{ij}\omega_j \xi_{r},\\
&d \xi_{r}=-\sum_i C^{r}_i\omega_i Y-\sum_{i,j}\omega_i
B^{r}_{ij}Y_j +\sum_{s} \theta_{rs}\xi_{s},
\end{split}
\end{equation}
where $\omega_{ij}$ are the connection $1$-forms of the M\"{o}bius
metric $\mathrm{g}$ and $\theta_{rs}$ the normal connection $1$-forms. The
tensors
\begin{equation}
{\bf A}=\sum_{i,j}A_{ij}\omega_i\otimes\omega_j,~~ {\bf
B}=\sum_{i,j,r}B^{r}_{ij}\omega_i\otimes\omega_j \xi_{r},~~
\Phi=\sum_{j,r}C^{r}_j\omega_j \xi_{r}
\end{equation}
are called the Blaschke tensor, the M\"{o}bius second fundamental
form and the M\"{o}bius form of $x$, respectively.
The covariant derivatives $A_{ij,k}, B^{r}_{ij,k}, C^{r}_{i,j}$ are defined as usual. For example,
\begin{eqnarray*}
&&\sum_j C^{r}_{i,j}\omega_j=d C^{r}_i+\sum_j C^{r}_j\omega_{ji}
+\sum_{s} C^{s}_i\theta_{sr},\\
&&\sum_k B^{r}_{ij,k}\omega_k=d B^{r}_{ij}+\sum_k
B^{r}_{ik}\omega_{kj} +\sum_k B^{r}_{kj}\omega_{ki}+\sum_{s}
B^{s}_{ij}\theta_{sr}.
\end{eqnarray*}
The integrability conditions for the structure equations are given as below:
\begin{eqnarray}
&&A_{ij,k}-A_{ik,j}=\sum_{r}B^{r}_{ik}C^{r}_j
-B^{r}_{ij}C^{r}_k,\label{equa1}\\
&&C^{r}_{i,j}-C^{r}_{j,i}=\sum_k(B^{r}_{ik}A_{kj}
-B^{r}_{jk}A_{ki}),\label{equa2}\\
&&B^{r}_{ij,k}-B^{r}_{ik,j}=\delta_{ij}C^{r}_k
-\delta_{ik}C^{r}_j,\label{equa3}\\
&&R_{ijkl}=\sum_{r}B^{r}_{ik}B^{r}_{jl}-B^{r}_{il}B^{r}_{jk}
+\delta_{ik}A_{jl}+\delta_{jl}A_{ik}
-\delta_{il}A_{jk}-\delta_{jk}A_{il},\label{equa4}\\
&&R^{\perp}_{rs ij}=\sum_k
B^{r}_{ik}B^{s}_{kj}-B^{s}_{ik}B^{r}_{kj}. \label{equa5}
\end{eqnarray}
Here $R_{ijkl}$ denote the curvature tensor of $\mathrm{g}$.
Other restrictions on tensor $\bf B$ are
\begin{equation}
\sum_j B^{r}_{jj}=0, ~~~\sum_{i,j,r}(B^{r}_{ij})^2=\frac{m-1}{m}. \label{equa7}
\end{equation}
All coefficients in the structure equations are determined by $\{\mathrm{g}, {\bf B}\}$
and the normal connection $\{\theta_{\alpha\beta}\}$.
Coefficients of M\"{o}bius invariants and
the isometric invariants are related as below. (We omit the formula for $A_{ij}$ since it will not be used later.)
\begin{align}
B^{r}_{ij}&=\rho^{-1}(h^{r}_{ij}-H^{r}\delta_{ij}),\label{2.22}\\
C^{r}_i&=-\rho^{-2}[H^{r}_{,i}+\sum_j(h^{r}_{ij}
-H^{r}\delta_{ij})e_j(\ln\rho)]. \label{2.23}
\end{align}
\begin{remark}
For $x: M^{m} \rightarrow \mathbb{R}^{m+p}$, the M\"obius position vector $Y: M^{m}\rightarrow \mathbb{R}^{m+p+2}_1$ and the mean curvature sphere $\{\xi_{1},\cdots,\xi_p\}$ are given by
\[
Y=\rho(\frac{1+|x|^2}{2}, \frac{1-|x|^2}{2}, x),
\]
\[
\xi_r=\left(\frac{1+|x|^2}{2}, \frac{1-|x|^2}{2}, x\right)H^r+(x\cdot n_r,-x\cdot n_r,n_r).
\]
\end{remark}

\section{Wintgen ideal submanifolds of codimension 2}

From now on, we assume $x: M^{m}\to \mathbb{S}^{m+2}$ to be a codimension two Wintgen ideal submanifold. According to \cite{Dajczer3} and \cite{Ge}, that means we can choose a suitable tangent and normal frame ($\{E_1,\cdots,E_m\}$ and $\{\xi_1,\xi_2\}$) such the M\"obius second fundamental form $B$ can be written down as follows:
\begin{equation}\label{3.1}
B^{1}=
\begin{pmatrix}
0 & \mu & 0 & \cdots & 0\\
\mu & 0 & 0 & \cdots & 0\\
0  & 0 & 0 & \cdots & 0\\
\vdots & \vdots & \vdots & \ddots & \vdots\\
0  & 0 & 0 & \cdots & 0
\end{pmatrix},~~
B^{2}=
\begin{pmatrix}
\mu & 0 & 0 & \cdots & 0\\
0 & -\mu & 0 & \cdots & 0\\
0  & 0 & 0 & \cdots & 0\\
\vdots & \vdots & \vdots & \ddots & \vdots\\
0  & 0 & 0 & \cdots & 0
\end{pmatrix},~~
\mu=\sqrt{\frac{m-1}{4m}}.
\end{equation}
Note that $\mu=\sqrt{\frac{m-1}{4m}}$ is a constant determined by \eqref{equa7}.
The canonical distribution $\mathbb{D}=\mathrm{Span}\{E_1,E_2\}$ is well-defined, as well as its orthogonal distribution $\mathbb{D}^\bot=\mathrm{Span}\{E_3,\cdots,E_m\}$.

For convenience we adopt the convention below on the range of indices:
\[
1\le i,j,k,l\le m,~~3\le a,b\le m.
\]
First we compute the covariant derivatives of $B^r_{ij}$.
The result is
\begin{equation}\label{eq-B1}
\begin{split}
&\sum_{j} B^{1}_{11,j}\omega_j=-\mu\theta, ~~~\sum_{j} B^{2}_{11,j}\omega_j=~0,\\
&\sum_{j} B^{1}_{12,j}\omega_j=~0, ~~~\sum_{j} B^{2}_{12,j}\omega_j=~\mu\theta,\\
&\sum_{j} B^{1}_{22,j}\omega_j=\mu\theta, ~~~\sum_{j} B^{2}_{22,j}\omega_j=~0,
\end{split}
\end{equation}
and
\begin{equation}\label{eq-B2}
\begin{split}
&\sum_{j} B^{1}_{1a,j}\omega_j=\mu\omega_{2a}, ~~~\sum_{j} B^{2}_{1a,j}\omega_j=\mu\omega_{1a},\\
&\sum_{j} B^{1}_{2a,j}\omega_j=\mu\omega_{1a}, ~~~\sum_{j} B^{2}_{2a,j}\omega_j=-\mu\omega_{2a},\\
&~~~~~~~~~~B^{1}_{ab,j}=0, ~~~B^{2}_{ab,j}=0.
\end{split}
\end{equation}
Here
\begin{equation}\label{eq-theta}
\theta\triangleq 2\omega_{12}+\theta_{12}
\end{equation}
is a combination of the connection 1-forms
of the bundle $\mathbb{D}$ and the normal bundle.

Since $B^r_{ij,k}$ is symmetric on distinct $i,j,k$ by \eqref{equa3},
by \eqref{eq-B1} and \eqref{eq-B2},
\begin{equation}
B^{1}_{1a,2}=B^{1}_{2a,1}=B^{1}_{12,a}=0;
~~~B^{1}_{1a,b}=B^{1}_{2a,b}=0 ~(\text{if}~a\ne b).
\end{equation}
Again by \eqref{eq-B2} we obtain
\begin{eqnarray}
\omega_{1a}&=&\frac{1}{\mu}\sum_{j} B^{1}_{2a,j}\omega_j
=\frac{1}{\mu}(B^{1}_{2a,2}\omega_2+B^{1}_{2a,a}\omega_a)
=\frac{1}{\mu}\sum_{j} B^{2}_{1a,j}\omega_j,\label{eq-omega1a}\\
\omega_{2a}&=&\frac{1}{\mu}\sum_{j} B^{1}_{1a,j}\omega_j
=\frac{1}{\mu}(B^{1}_{1a,1}\omega_1+B^{1}_{1a,a}\omega_a)
=-\frac{1}{\mu}\sum_{j} B^{2}_{2a,j}\omega_j.\label{eq-omega2a}
\end{eqnarray}
Comparing the components above implies
\begin{equation}\label{eq-B3}
B^{2}_{1a,1}=B^{2}_{2a,2}=0.
\end{equation}
By \eqref{eq-B2}, $B^{2}_{11,a}=B^{2}_{22,a}=0$. Together with \eqref{equa3}\eqref{eq-B3} we get that for $3\le a\le m$,
\begin{eqnarray}
C^{2}_a&=&B^{2}_{11,a}-B^{2}_{1a,1}=0; \label{eq-C1}\\
C^{1}_a&=&B^{1}_{11,a}-B^{1}_{1a,1}=
-\mu\theta(e_3)-\mu\omega_{23}(e_1) \notag\\
&=&B^{1}_{22,a}-B^{1}_{2a,2}=\mu\theta(e_3)-\mu\omega_{13}(e_2). \label{eq-C2}
\end{eqnarray}
Similarly there is
\begin{equation}\label{eq-C3}
\begin{split}
&C^{1}_1=B^{1}_{22,1}=\mu\theta(e_1)=-B^{1}_{1a,a}
=-\mu\omega_{2a}(e_a), \\
&C^{1}_2=B^{1}_{11,2}=-\mu\theta(e_2)=-B^{1}_{2a,a}
=-\mu\omega_{1a}(e_a), \\
&C^{2}_1=-B^{2}_{12,2}=-\mu\theta(e_2)=-B^{2}_{1a,a}
=-\mu\omega_{1a}(e_a), \\
&C^{2}_2=-B^{2}_{12,1}=-\mu\theta(e_1)=-B^{2}_{2a,a}
=\mu\omega_{2a}(e_a).
\end{split}
\end{equation}
In particular we have
\[C^{1}_1=-C^{2}_2,~~C^{1}_2=C^{2}_1.\]
Also note that the normal connection 1-form $\theta_{12}=-\theta_{21}$. Substitute these relations into the last structure equation in \eqref{eq-structure}, we obtain
\begin{align*}
d\xi_1&=-(C_1^1\omega_1+C_2^1\omega_2)Y
-\mu(\omega_1Y_2+\omega_2Y_1)+\theta_{12}\xi_2,\\
d\xi_2&=-(C_2^1\omega_1-C_1^1\omega_2)Y
-\mu(\omega_1Y_1-\omega_2Y_2)-\theta_{12}\xi_1.
\end{align*}
Combining these two equations and re-writing them using the complexified frame, we obtain an elegant formula as below:
\begin{equation}\label{J1}
d(\xi_{1}-i\xi_{2})=i\mu(\o_1+i\o_2)(\eta_1+i\eta_2)
+i\theta_{12}(\xi_1-i\xi_2),
\end{equation}
where
\begin{equation}\label{eq-eta1}
\eta_1=Y_1+\frac{C_2^1}{\mu}Y,~~
\eta_2=Y_2+\frac{C^1_1}{\mu}Y.
\end{equation}
This formula and its geometric explanation is the focus of this paper.

As a preparation, we point out that for a codimension two submanifold in $\mathbb{S}^{m+2}$, the mean curvature sphere defines a Gauss map into the Grassmann manifold $Gr(2,\mathbb{R}^{m+4}_1)$, the module space of spacelike 2-planes in the lorentz space which is a pseudo-Riemannian symmetric space. This can be identified with a non-compact complex quadric $Q^{m+2}_+\subset \mathbb{C}P^{m+3}$ via the following correspondence
\[
\mathrm{Span}_{\mathbb{R}}\{\xi_1,\xi_2\}~\leftrightarrow~
[\xi_1-i\xi_2]\in\mathbb{C}P^{m+3}.
\]

\section{The geometry of 1-isotropic complex curves in $Q^{m+2}_+$}

To describe $Q^{m+2}_+$, note that the complex space $\mathbb{C}^{m+4}_1=\mathbb{R}^{m+4}_1\otimes\mathbb{C}$ is endowed with the complex inner product coming from the bilinear extension of the Lorentz metric.
The null lines in this space form a $m+2$ dimensional compact complex quadric hypersurface
\[
Q^{m+2}=\{[\xi]\in \mathbb{C}P^{m+3}|~
\xi\in\mathbb{C}^{m+4}_1,\<\xi,\xi\>=0\}.
\]
This $\xi$ is either a complex multiple of a light-like vector in $\mathbb{R}^{m+4}_1$, or it can be written as $\xi=\xi_1-i\xi_2$
where $\{\xi_1,\xi_2\}$ is an orthonormal frame of a spacelike 2-space.

In the first case, such $[\xi]$'s form the projective light cone which could be identified with the sphere $\mathbb{S}^{m+2}$.

In the second case, they form the quadric
\[
Q^{m+2}_+=\{[\xi]|\<\xi,\xi\>=0,\<\xi,\bar\xi\> >0\}
\cong Q^{m+2}\setminus \mathbb{S}^{m+2}
\]
which is a non-compact complex manifold endowed with an indefinite Hermitian metric with signature $(m+1,1)$. In terms of the local lift $\xi$, this Hermitian metric is defined by
\[
h_{\xi}=\frac{1}{\<\xi,\bar\xi\>} \Big\<
d\xi-\frac{\<d\xi,\bar\xi\>}{\<\xi,\bar\xi\>}\xi,
d\bar\xi-\frac{\<d\bar\xi,\xi\>}{\<\xi,\bar\xi\>}\bar\xi
\Big\>.
\]
It is evident that this metric is independent to the choice of the lift $\xi$, and it is invariant under the action of the Lorentz orthogonal group $O(m+3,1)$.\\

A complex curve in $Q^{m+2}_+$ is a holomorphic immersion of a Riemann surface $[\xi]:\overline{M^2}\to Q^{m+2}_+$, given by a local lift $\xi:\overline{M^2}\to \mathbb{C}^{m+4}_1$ satisfying
\[
\frac{\partial}{\partial\bar{z}}\xi=\lambda~\xi \parallel \xi,~~~\text{where}~\frac{\partial}{\partial\bar{z}}
\triangleq\frac{1}{2}(\frac{\partial}{\partial u}+i\frac{\partial}{\partial v})
\]
for some local complex function $\lambda$ and local complex coordinate $z=u+iv$ of $\overline{M^2}$.

A complex curve $[\xi]:\overline{M^2}\to Q^{m+2}_+$ is called \emph{1-isotropic} if and only if the complex differential $d\xi$ is isotropic.

The following is a characterization of such holomorphic 1-isotropic curves. The easy proof is omitted at here.
\begin{lemma}\label{lem-J}
A map $\xi=\xi_1-i\xi_2$ from a Riemann surface $\overline{M^2}$ to $\mathbb{C}^{m+4}_1$ determines a 1-isotropic and holomorphic immersion $[\xi]:\overline{M^2}\to Q^{m+2}_+$ if, and only if,
$\<\xi,\xi\>=0$ and the horizontal part
\[d\xi-\frac{\<d\xi,\bar\xi\>}{\<\xi,\bar\xi\>}\xi\]
is a vector-valued $(1,0)$ form which is isotropic.

In other words, locally there exist $(1,0)$ form $\theta_1+i\theta_2$, 1-form $\theta_{12}$ and orthonormal frame vectors $\xi_1,\xi_2,\eta_1,\eta_2\in \mathbb{R}^{m+4}_1$ on $\overline{M^2}$ such that
\[
d\xi=i\mu(\theta_1+i\theta_2)(\eta_1+i\eta_2)+i\theta_{12}\xi,~~
\mu=\sqrt{\frac{m-1}{4m}}.
\]
\end{lemma}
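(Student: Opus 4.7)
The plan is to unravel the three conditions---nullity ($\<\xi,\xi\>=0$), $(1,0)$-type of the horizontal part, and isotropy of that horizontal part---by choosing a well-adapted local lift and then exhibiting the promised real orthonormal frame. I will carry out the forward direction; the backward implication is the same computation read in reverse.

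First I would normalize the lift. Since $\<\xi,\xi\>=0$ and $\<\xi,\bar\xi\>>0$, the real and imaginary parts $\xi_1,\xi_2$ of $\xi$ satisfy $|\xi_1|^2=|\xi_2|^2>0$ and $\<\xi_1,\xi_2\>=0$; rescaling $\xi$ by a positive real function I may assume $\<\xi,\bar\xi\>=2$, so $\{\xi_1,\xi_2\}$ is a spacelike orthonormal pair. With this normalization $\<\xi,\bar\xi\>$ is constant, and differentiating gives $\<d\xi,\bar\xi\>+\overline{\<d\xi,\bar\xi\>}=0$, so $\<d\xi,\bar\xi\>$ is purely imaginary and equals $2i\theta_{12}$ for a well-defined real 1-form $\theta_{12}$. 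Setting $H:=d\xi-i\theta_{12}\xi$ one checks, using $\<d\xi,\xi\>=0$ (obtained by differentiating $\<\xi,\xi\>=0$), that $\<H,\xi\>=0$ and $\<H,\bar\xi\>=0$.

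Next I would translate each differential-geometric condition into an algebraic condition on $H$. Holomorphicity $\xi_{\bar z}\parallel\xi$ is equivalent to the $(0,1)$-part of $H$ vanishing, hence to $H$ being a $(1,0)$-form. The 1-isotropy $\<\xi_z,\xi_z\>=0$ is equivalent to $\<H,H\>=0$, because $\<\xi_z,\xi\>=0$ and $\<\xi,\xi\>=0$ together show that subtracting a multiple of $\xi$ from $\xi_z$ does not change the self-pairing. Thus the pair of conditions in the lemma is exactly the statement that $H$ is a vector-valued isotropic $(1,0)$-form.

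Finally I would exhibit the orthonormal frame. Writing $H=\zeta\,dz$ and decomposing $\zeta=\beta(\eta_1+i\eta_2)$ with $\eta_1,\eta_2\in\mathbb{R}^{m+4}_1$, the $\<,\>$- and Hermitian-orthogonality of $H$ to $\xi$ force $\<\eta_j,\xi_k\>=0$ for all $j,k$, while isotropy of $\zeta$ and reality of the $\eta_j$ give $|\eta_1|^2=|\eta_2|^2$ and $\<\eta_1,\eta_2\>=0$. The crucial observation is that $\eta_1,\eta_2$ must be spacelike: they lie in the orthogonal complement of the spacelike plane $\mathrm{Span}\{\xi_1,\xi_2\}$ in $\mathbb{R}^{m+4}_1$, which has signature $(m+1,1)$ and therefore contains no 2-dimensional timelike or totally null subspace. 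Rescaling $\beta$ by a positive real factor to make $|\eta_j|=1$, and writing the remaining complex factor $\beta/(i\mu)\,dz$ uniquely as $\theta_1+i\theta_2$ with real 1-forms $\theta_1,\theta_2$ by separating real and imaginary parts, yields the claimed expression. The main technical point where I expect to spend the most care is precisely this signature argument ensuring the spacelike character of $\eta_1,\eta_2$; everything else is definition-chasing.
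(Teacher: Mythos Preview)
Your argument is correct and is exactly the natural unpacking of the definitions; there is nothing to compare it against, because the paper omits the proof of this lemma entirely, declaring it ``easy.'' You have supplied the verification the authors presumably had in mind. One minor point worth making explicit in your signature step: you should also dispose of the degenerate possibility that $\eta_1,\eta_2$ are linearly dependent (both null and proportional), which is excluded because then $\langle H,\bar H\rangle=0$ and the map $[\xi]$ would fail to be an immersion.
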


\section{Mean curvature spheres correspond to a holomorphic 1-isotropic curve}

In this section, we will show that the \emph{mean curvature sphere congruence} of a Wintgen ideal submanifold $M^m\to \mathbb{S}^{m+2}$ is a 2-parameter family of $m$-spheres. Moreover, this Gauss map defines a holomorphic, 1-isotropic curve in $Q^{m+2}_+$, and the submanifold can be recovered from this curve.

\begin{theorem}\label{thm-envelop}
For a Wintgen ideal submanifold $x:M^m\to \mathbb{S}^{m+2}$ which is umbilic-free we have:

(1) The complex vector-valued function $\xi=\xi_{1}-i\xi_{2}$
defines a Gauss map
\[
[\xi]:\overline{M^2}\to Q^{m+2}_+\subset\mathbb{C}P^{m+3}_1.
\]
The image is a 1-isotropic complex curve in the sense that
\begin{equation}\label{eq-isotropic}
\langle d\xi,d\xi\rangle=0,~~
\langle d\xi,d\bar\xi\rangle>0.
\end{equation}

(2) The distribution $\mathbb{D}^{\bot}=\mathrm{Span}\{E_3,\cdots,E_m\}$ is integrable. Its integral submanifolds define a foliation $\mathscr{D}$ of $M^m$.
Moreover, we have the quotient manifold structure
\[
\overline{M^2}=M^m/\mathscr{D}.
\]

(3) The projection $\pi:M^m\to \overline{M^2}$
is a Riemannian submersion (up to the factor $\mu$),
where $\widehat{M}^{m}$ is endowed with the M\"obius metric and $\overline{M^2}$ is endowed with the induced metric from $Q^{m+2}_+$.

(4)The mean curvature spheres $\mathrm{Span}\{\xi_{1},\xi_{2}\}$ is a 2-parameter family.
They envelope a $m$-dimensional submanifold $\widehat{M}^{m}\supset M^{m}$ (it might degenerate at some points).

(5)
$\mathscr{D}$ extends to the whole envelope $\widehat{M}^{m}$
as a foliation by a 2-parameter family of $(m-2)$ dimensional spheres.
In other words, $\widehat{M}^{m}$ can be viewed as a sphere bundle
over the Riemann surface $\overline{M^2}$.
\end{theorem}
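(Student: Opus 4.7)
My plan hinges on extracting all five assertions from equation \eqref{J1}, using only the M\"obius frame relations $\langle Y,Y\rangle=\langle N,N\rangle=0$, $\langle Y,N\rangle=1$, $\langle Y_j,Y_k\rangle=\delta_{jk}$, and the orthogonality of $Y,N,Y_j$ to $\xi_r$. These relations already encode everything the theorem claims; the work is to read them off.

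I would first establish parts (1) and (2) simultaneously from \eqref{J1}. Orthonormality of $\{\xi_1,\xi_2\}$ gives at once $\langle\xi,\xi\rangle=0$ and $\langle\xi,\bar\xi\rangle=2>0$, placing $[\xi]$ in $Q^{m+2}_+$. Since $\eta_j=Y_j+\alpha_j Y$ with $\langle Y,Y\rangle=0=\langle Y,Y_j\rangle$, the vectors $\{\eta_1,\eta_2\}$ are orthonormal, so $\langle\eta_1+i\eta_2,\eta_1+i\eta_2\rangle=0$; substituting into \eqref{J1} and using orthogonality of $\eta_j$ and $\xi$ to itself yields $\langle d\xi,d\xi\rangle=0$, the isotropy. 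For the quotient structure, the decisive observation is that $\omega_j|_{\mathbb{D}^\perp}=0$ for $j=1,2$, so \eqref{J1} forces $d\xi|_{\mathbb{D}^\perp}$ to be a scalar multiple of $\xi$; the kernel of $d[\xi]$ therefore contains $\mathbb{D}^\perp$, while on $\mathbb{D}$ the rank is exactly $2$ (since $\mu\neq 0$ and $\eta_1+i\eta_2\not\parallel\xi$). The constant-rank theorem then delivers both the integrability of $\mathbb{D}^\perp$ and the factorization $[\xi]:\overline{M^2}=M^m/\mathscr{D}\to Q^{m+2}_+$, and Lemma \ref{lem-J} applied to the descended equation confirms holomorphy and $1$-isotropy.

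For part (3), I would compute the pullback Hermitian metric directly. From $\langle d\xi,\bar\xi\rangle=2i\theta_{12}$ one finds the horizontal lift $d\xi-i\theta_{12}\xi=i\mu(\omega_1+i\omega_2)(\eta_1+i\eta_2)$, and then $\langle\xi,\bar\xi\rangle=2$ together with $\langle\eta_1+i\eta_2,\eta_1-i\eta_2\rangle=2$ collapses the pullback metric to $\mu^2(\omega_1^2+\omega_2^2)$, exactly $\mu^2$ times the M\"obius metric restricted to $\mathbb{D}$; hence $\pi$ is a Riemannian submersion up to the constant factor $\mu$. For parts (4) and (5), I would run a classical envelope analysis: a point $[\bar Z]\in\mathbb{S}^{m+2}$ lies on the characteristic of the sphere at $y\in\overline{M^2}$ iff $\langle\bar Z,\xi_r\rangle=0$ (incidence) together with $\langle\bar Z,\partial_t\xi_r\rangle=0$ for every $t\in T_y\overline{M^2}$ (tangency). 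Because the structure equations for $d\xi_r$ involve only $Y,Y_1,Y_2,\xi_s$, after absorbing the $\xi_s$-terms via incidence the four scalar tangency conditions collapse to the two linear constraints $\langle\bar Z,\eta_1\rangle=0=\langle\bar Z,\eta_2\rangle$ (the redundancy reflecting the block structure of \eqref{J1}). Together with the null condition, these cut out a codimension-$4$ Lorentz subspace intersected with the light cone, projectivizing in each fiber to an $(m-2)$-sphere. The inclusion $M^m\subset\widehat{M}^m$ is automatic since $Y$ satisfies all the orthogonalities; the $(m-2)$-sphere characteristics then assemble over $\overline{M^2}$ into the sphere bundle of part (5), and $\mathscr{D}$ extends to the corresponding foliation.

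The main obstacle I anticipate is the regularity question flagged in part (4): everything should be carried out first on the open dense set where the envelope is genuinely $m$-dimensional, and one must then check that the family of characteristic $(m-2)$-spheres depends smoothly on $y\in\overline{M^2}$. The reduction of the four tangency equations to the two independent ones also demands careful bookkeeping of the coefficients $C_1^1,C_2^1$ and the skew-symmetry $\theta_{12}=-\theta_{21}$, but these pieces all follow from the explicit formulas already derived in Section~3.
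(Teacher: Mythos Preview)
Your proposal is correct and follows essentially the same strategy as the paper: everything is extracted from the single formula \eqref{J1}. The paper likewise reads off the isotropy of $d\xi$ from $\langle\eta_1+i\eta_2,\eta_1+i\eta_2\rangle=0$, identifies the envelope as the light-like directions in $\mathrm{Span}\{\xi_1,\xi_2,\eta_1,\eta_2\}^\perp$, and obtains the sphere-bundle picture of (5) from the fact that this $4$-space depends only on the point of $\overline{M^2}$.

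There are two minor differences in execution worth noting. For the integrability of $\mathbb{D}^\perp$, the paper verifies Frobenius directly from the explicit expressions \eqref{eq-omega1a}, \eqref{eq-omega2a} for $\omega_{1a},\omega_{2a}$, whereas you deduce it from the constant-rank theorem applied to $[\xi]$; your route is slicker here and simultaneously yields the local quotient structure. For part (5), the paper differentiates \eqref{J1} once more (obtaining \eqref{eta}) to show that $V=\mathrm{Span}\{\xi_1,\xi_2,\eta_1,\eta_2\}$ is invariant along the leaves of $\mathscr{D}$, and then concludes that each leaf lies in the corresponding $(m-2)$-sphere; you instead work directly on $\overline{M^2}$, where $V$ is automatically well-defined, so the extra differentiation is avoided. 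Both arguments are valid and lead to the same conclusion.
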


\begin{proof}
By the assumption that $M^{m}$ is Wintgen ideal with codimension two, we have obtained the formula \eqref{J1}:
\[
d(\xi_1-i\xi_2)=i\mu(\o_1+i\o_2)(\eta_1+i\eta_2)
+i\theta_{12}(\xi_1-i\xi_2),
\]
with $\eta_1=Y_1+\frac{C_2^1}{\mu}Y,\eta_2=Y_2+\frac{C^1_1}{\mu}Y.$
It follows that the tangent map of $[\xi]$ maps the tangent space $T_pM^m$ at one point $p\in M^m$ to a 1-dimensional complex line in the tangent space of $TQ^{m+2}_+$ at the corresponding image point. Thus the image of $M^m$ under this map is a complex curve of $Q^{m+2}_+$. Intrinsically this is a Riemann surface, which we denote as $\overline{M^2}$. This proves the first part of the conclusion (1).

It is clear from \eqref{eq-omega1a} and \eqref{eq-omega2a}
that the distribution
\[
\mathbb{D}^{\bot}=\mathrm{Span}\{E_3,\cdots,E_m\}
\]
is integrable. These $(m-2)$-dimensional integral submanifolds of $\mathbb{D}^{\bot}$ defines a foliation $\mathscr{D}$ of $M$.
Along each leave of $\mathscr{D}$, the restriction of the tangent map $d\xi$ is parallel to $\xi$ by \eqref{J1}. So $[\xi]:M\to Q^{m+2}_+$ is always constant when restricted to such a leave. This enables us to define a quotient map
\[
M^m\to M^m/\mathscr{D}\cong \overline{M^2}
\]
where each leave of $\mathscr{D}$ is mapped to a single point. In particular this is a submersion between differentiable manifolds. By \eqref{J1} this is even a Riemannian submersion up to the factor $\mu$.
Thus the conclusion (2) and (3) are established.

With respect to the induced Riemann surface structure and local complex coordinate $z$, $dz$ should be a multiple of $\o_1+i\o_2$. Regard $d\xi$ as a vector-valued complex differential form, it follows from \eqref{J1} that $\langle d\xi,d\xi\rangle=0,
\langle d\xi,d\bar\xi\rangle>0.$ So the conclusion (1) has been proved completely.

By the conclusion (1), the mean curvature sphere congruence $\mathrm{Span}\{\xi_1,\xi_2\}$ is obviously a real 2-parameter family. In M\"obius geometry, it is well-known that such a sphere congruence has an envelope if and only if $\xi_1,\xi_2,d\xi_1,d\xi_2$ always span a family of 4-dimensional space-like subspaces, and the points on the envelope is given by the light-like directions located in the orthogonal complements of $\mathrm{Span}\{\xi_1,\xi_2,d\xi_1,d\xi_2\}$.
According to \eqref{J1}, this is exactly such a case.
We denote the envelope as $\widehat{M}^m\supset M^m$, which consists of a 2-parameter family of $(m-2)$-dimensional spheres; each $(m-2)$-dimensional sphere corresponds to
the orthogonal complement of $\mathrm{Span}_{\mathbb{R}}\{\xi_1,\xi_2,\eta_1,\eta_2\}_p$
at one point $p\in M^{m}$. This establishes the conclusion (4).

We assert that every integral submanifold of $\mathbb{D}^{\bot}$ in $M^{m}$ is contained in such a $(m-2)$-dimensional sphere. To see that, take exterior differentiation at both sides of \eqref{J1}. The result looks like
\begin{equation}\label{eta}
d(\eta_1+i\eta_2)=(\o_1+i\o_2)\cdot \eta + (\cdots)(\eta_1+i\eta_2)+(\cdots)(\xi_1-i\xi_2),
\end{equation}
where the component $\eta$ is orthogonal to $\mathrm{Span}_{\mathbb{C}}\{\xi_1,\xi_2,d\xi_1,d\xi_2\}$.
It follows that the subspace
$V=\mathrm{Span}_{\mathbb{R}}\{\xi_1,\xi_2,\eta_1,\eta_2\}$
is invariant along any leave of the foliation $\mathscr{D}$. In particular, the integration of $Y$ along $\mathscr{D}$ is always located in the orthogonal complement of $V$, which implies that any integral submanifold is located on the corresponding $(m-2)$-dimensional sphere. So we obtain the conclusion (5).

In particular, this shows that the foliation structure $\mathscr{D}$ of $M^m$ is indeed a $(m-2)$ dimensional sphere bundle over a Riemann surface $\overline{M^2}$.
This finishes the proof.
\end{proof}

In the statement of the theorem above, we can add that the envelope $\widehat{M}^{m}\supset M^{m}$
is still a Wintgen ideal submanifold (on the subset where it is an immersed submanifold). This is the corollary of the next theorem, which is the converse of Theorem~\ref{thm-envelop}.

\begin{theorem}\label{thm-converse}
Given a holomorphic, isotropic curve $[\xi]:\overline{M}\to \mathbb{Q}$.
The envelope $\widehat{M}^m$ of the corresponding 2-parameter family spheres
is a $m$-dimensional Wintgen ideal submanifold when it is immersed, and $\widehat{M}^m$ has $[\xi]$ as its mean curvature sphere.
\end{theorem}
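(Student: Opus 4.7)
The plan is to realize $\widehat{M}^m$ concretely as an $(m-2)$-sphere bundle over $\overline{M}^2$, install a compatible M\"obius moving frame, and then read the M\"obius second fundamental form directly off the given structure equation for $d\xi$. Since the Wintgen ideal property is characterized by the specific form~\eqref{3.1} of $B^r_{ij}$ in an adapted frame, this will immediately conclude the proof; in particular, $\mathrm{Span}\{\xi_1,\xi_2\}$ will automatically emerge as the mean curvature sphere of $\widehat{M}^m$.

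First I would invoke Lemma~\ref{lem-J} to fix, over $\overline{M}^2$, an orthonormal quadruple $\xi_1,\xi_2,\eta_1,\eta_2\in\mathbb{R}^{m+4}_1$ and one-forms $\theta_1+i\theta_2$ (of type $(1,0)$), $\theta_{12}$ satisfying
\[
d\xi=i\mu(\theta_1+i\theta_2)(\eta_1+i\eta_2)+i\theta_{12}\xi,\qquad \mu=\sqrt{\tfrac{m-1}{4m}}.
\]
At each $p$ the 4-plane $V_p=\mathrm{Span}_{\mathbb{R}}\{\xi_1,\xi_2,\eta_1,\eta_2\}$ is spacelike, so its Lorentz-orthogonal complement $V_p^{\perp}$ has signature $(m-1,1)$; its projective null cone is an $(m-2)$-sphere $\mathbb{S}^{m-2}_p$ sitting inside the $m$-sphere represented by $\mathrm{Span}\{\xi_1,\xi_2\}$. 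Setting $\widehat{M}^m=\bigsqcup_{p\in\overline{M}^2}\mathbb{S}^{m-2}_p$ exhibits $\widehat{M}^m$ as an $(m-2)$-sphere bundle over $\overline{M}^2$ of total dimension $m$. For any smooth null section $Y$ with $Y(\cdot)\in V^{\perp}_{\pi(\cdot)}$, the formula for $d\xi$ keeps $d\xi_i$ inside $V$, hence
\[
\langle dY,\xi_i\rangle=-\langle Y,d\xi_i\rangle=0\qquad (Y\in V^{\perp}),
\]
proving that the 2-parameter family $\mathrm{Span}\{\xi_1,\xi_2\}$ is tangent to $\widehat{M}^m$ along the envelope, so that $[Y]:\widehat{M}^m\to\mathbb{S}^{m+2}$ is indeed the envelope. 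Immersiveness on an open dense subset then follows from $\langle d\xi,d\bar\xi\rangle>0$, which forces the horizontal lifts of $\theta_1,\theta_2$ to be non-degenerate, together with the non-degeneracy of vertical tangent vectors in the Lorentzian fiber $V^{\perp}$.

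Next I would install a M\"obius frame $\{Y,N,Y_1,\dots,Y_m,\xi_1,\xi_2\}$ on $\widehat{M}^m$ by picking a $g$-orthonormal tangent frame $\{E_1,\dots,E_m\}$ in which $E_1,E_2$ are horizontal (so that the dual 1-forms $\omega_1,\omega_2$ pull back from $\theta_1,\theta_2$ up to the canonical-lift scaling) and $E_a$ for $a\ge 3$ are vertical along the fiber in $V_p^{\perp}$. With $Y_j=E_j(Y)$, this choice forces $Y_1,Y_2\equiv\eta_1,\eta_2\pmod{Y}$ and $Y_a\in V_p^{\perp}$ for $a\ge 3$. Plugging these relations into the real and imaginary parts of the assumed structure equation and comparing with the standard expansion
\[
d\xi_r=-\sum_i C^r_i\omega_i Y-\sum_{i,j}\omega_i B^r_{ij}Y_j+\sum_s\theta_{rs}\xi_s
\]
from~\eqref{eq-structure}, the $Y_j$-coefficients read off as
\[
B^1_{12}=B^1_{21}=\mu,\qquad B^2_{11}=\mu,\qquad B^2_{22}=-\mu,
\]
with all other $B^r_{ij}$ vanishing. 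This is exactly the Wintgen ideal form~\eqref{3.1}; moreover $\sum(B^r_{ij})^2=4\mu^2=(m-1)/m$ matches the normalization~\eqref{equa7}, so $Y$ agrees (up to an inessential sign) with the canonical M\"obius position vector. Consequently $\widehat{M}^m$ is codimension-two Wintgen ideal wherever it is immersed, and $\mathrm{Span}\{\xi_1,\xi_2\}$ is automatically its mean curvature sphere, so $[\xi]$ is the Gauss map of $\widehat{M}^m$.

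The main obstacle I foresee is the moving-frame bookkeeping: one must carefully argue that the tangent frame $\{Y_1,\dots,Y_m\}$ produced bundle-wise really is the M\"obius frame rather than a gauge-equivalent variant, which reduces to verifying that the normalization $\sum(B^r_{ij})^2=(m-1)/m$ holds for the chosen lift $Y$. This is hard-wired into the constant $\mu=\sqrt{(m-1)/(4m)}$ supplied by Lemma~\ref{lem-J}, so once this bookkeeping is settled the entire Wintgen ideal structure emerges directly from the single structure equation assumed for $d\xi$.
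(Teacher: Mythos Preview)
Your overall architecture is right and matches the paper's: realize the envelope as an $(m-2)$-sphere bundle, build a frame adapted to the splitting $V\oplus V^\perp$, and read off the second fundamental form from $d\xi$. But there is a real gap in the middle step, and it is precisely the step the paper flags as ``crucial.''

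You assume that choosing $E_1,E_2$ horizontal forces the dual forms $\omega_1,\omega_2$ to be scalar multiples of the pullbacks of $\theta_1,\theta_2$, and that $Y_1,Y_2\equiv\eta_1,\eta_2\pmod Y$. Neither is automatic. From $d\hat Y=-\omega\hat Y+\hat\omega_1\eta_1+\hat\omega_2\eta_2+\sum_a\hat\omega_a\eta_a$ one only knows that $\hat\omega_1,\hat\omega_2$ vanish along the fibers; horizontally they are some $\mathrm{GL}(2)$-combination of $\theta_1,\theta_2$. The paper has to differentiate the structure equations (equivalently use $d^2\xi=0$) to obtain the conformal relation $\hat\omega_1+i\hat\omega_2=(\hat F-i\hat G)(\theta_1+i\theta_2)$; without this you cannot read off $B^r_{ij}$ in the form~\eqref{3.1}, and in particular you cannot conclude that $\sum_j B^r_{jj}=0$.

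This last point is where your diagnosis of the ``bookkeeping obstacle'' goes wrong. Matching the normalization $\sum(B^r_{ij})^2=(m-1)/m$ only fixes the \emph{scale} of the lift $Y$; it says nothing about whether $\{\xi_1,\xi_2\}$ is the mean curvature sphere. That is governed by the \emph{traceless} condition $\sum_j B^r_{jj}=0$, equivalently $\langle\hat\Delta\hat Y,\xi_r\rangle=0$. In your frame $\{\xi_1,\xi_2\}$ is merely a tangent sphere congruence, and tangency alone does not make it the mean curvature sphere. The paper proves this separately: after establishing the conformal relation above, it computes $\sum_j\hat E_j\hat E_j(\xi_1-i\xi_2)$ and shows its $\hat Y$-component vanishes, which is exactly the traceless condition. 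Only after this is the identification of $B^r_{ij}$ with the M\"obius second fundamental form legitimate, and the Wintgen ideal conclusion follows. So the missing ingredient is not a scaling check but an honest mean-curvature (Laplacian) computation, driven by the conformal relation between $\hat\omega_1+i\hat\omega_2$ and $\theta_1+i\theta_2$.
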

\begin{proof}
The proof is a little bit long, so a sketch might be helpful.
First we will give a local description of the envelope $\widehat{M}^m$ as an immersion $\hat{Y}:U\times S^{m-2}\to\mathbb{S}^{m+2}$ for $U\subset \overline{M}$. After that we will introduce a moving frame along $U\times S^{m-2}$ and write out the structure equations. The crucial step is to show that $\hat{Y}$ still has $[\xi]$ as its mean curvature sphere.
Then it is straightforward to see that $\hat{Y}$ is Wintgen ideal.

By Lemma~\ref{lem-J}, the assumption of being holomorphic and isotropic implies
\begin{equation}\label{J2}
d(\xi_{1}-i\xi_{2})=i\mu(\theta_1+i\theta_2)(\eta_1+i\eta_2)
+i\theta_{12}(\xi_1-i\xi_2),
\end{equation}
where $\mu=\sqrt{\frac{m-1}{4m}}$, $\theta_1,\theta_2,\theta_{12}$ are real 1-forms locally defined  on the underlying Riemann surface $\overline{M}$.

It follows that $\{\xi_1,\xi_2,d\xi_1,d\xi_2\}$ span a 4-dimensional spacelike subspace $V$. So the sphere congruence $\mathrm{Span}_{\mathbb{R}}\{\xi_1,\xi_2\}$ has an
envelope $\widehat{M}$ which consists of the light-like directions in the orthogonal complement $V^{\bot}$.
Locally we can restrict to a small neighborhood $U\subset\overline{M^2}$ and choose smoothly local pseudo-orthonormal frames at every point $q\in U$
\[
e_0(q),e_1(q),\cdots,e_{m-1}(q)\in V^{\bot}(q), ~~\langle e_0(q),e_0(q)\rangle=-1.
\]
Then one may parameterize $\widehat{M}^{m}$ explicitly as $U\times S^{m-2}\to
\mathbb{S}^{m+2}$ given by
\[
(q,\Theta)~\to~
[\hat{Y}]=[e_0(q)+\sum_{j=1}^{m-1}\Theta_j e_j(q)]
\]
for $q\in U$ and $\Theta=(\Theta_1,\cdots,\Theta_{m-1})\in S^{m-2}$ the coordinates of a unit sphere in $(m-1)$-dimensional Euclidean space.
We want to show that this is a Wintgen ideal submanifold if it is immersed.

We introduce a moving frame along $U\times S^{m-2}$:
\[
\{\hat{Y},Y,\eta_3,\cdots,\eta_m\} \bot \{\eta_1,\eta_2,\xi_1,\xi_2\};
\]
it is required to be orthonormal except that
\[
\langle Y,Y\rangle=0=\langle {\hat Y},{\hat Y}\rangle,~
\langle Y,{\hat Y}\rangle=1.
\]
We emphasize that frame vectors $\{\eta_1,\eta_2,\xi_1,\xi_2\}$ are now defined on $U\times S^{m-2}$ by extending them as constants along the fiber $S^{m-2}$. We will also abuse the notation $\theta_1,\theta_2,\theta_{12}$ to mean their pull-back to $U\times S^{m-2}\subset \widehat{M}$ under the natural projection map. Then \eqref{J2} is still valid under this understanding. Using this moving frame, we write down the structure equations (the convention on the range of indices is $1\le j,k \le m, 3\le a,b\le m$):
\begin{align}
d\xi_{1}&=-\mu\theta_2\eta_1-\mu\theta_1\eta_2+\theta_{12}\xi_2,
\label{3.3}\\
d\xi_{2}&=-\mu \theta_1\eta_1+\mu\theta_2\eta_2-\theta_{12}\xi_1,\label{3.4}\\
d\eta_1&=-{\hat\o}_1Y-\o_1{\hat
Y}+\sum\nolimits_k\Omega_{1k}\eta_k+\mu\theta_2\xi_1+\mu\theta_1\xi_2,\label{3.5}\\
d\eta_2&=-{\hat\o}_2Y-\o_2{\hat
Y}+\sum\nolimits_k\Omega_{2k}\eta_k+\mu\theta_1\xi_1-\mu\theta_2\xi_2,\label{3.6}\\
d\eta_a&=-{\hat\o}_aY-\o_a{\hat Y}+\sum\nolimits_k\Omega_{ak}\eta_k,\label{3.7}\\
dY&=\o Y+\o_1\eta_1+\o_2\eta_2+\sum\nolimits_a\o_a\eta_a,\label{3.9}\\
d{\hat Y}&=-\o {\hat Y}+{\hat\o}_1\eta_1+{\hat\o}_2\eta_2+\sum\nolimits_a{\hat\o}_a\eta_a. \label{3.10}
\end{align}
Here $\o,\o_k,\hat\o_k,\Omega_{jk}$ are
1-forms locally defined on $U\times S^{m-2}\subset\widehat{M}$ which we don't need to know explicitly.

On the other hand, the coefficients of $\xi_1,\xi_2$ in these equations are explicitly determined by \eqref{3.3}, \eqref{3.4} and the orthogonality of the frames.

The crucial observation is that there exist some functions $\hat{F},\hat{G}$ such that
\begin{equation}\label{eq-FG}
{\hat\o_1}=\hat{F}\theta_1+\hat{G}\theta_2,~~ {\hat\o_2}=-\hat{G}\theta_1+\hat{F}\theta_2.
\end{equation}
This follows from differentiating \eqref{3.10} and comparing the coefficients of $\xi_1,\xi_2$; or equivalently, by comparison between \eqref{eta} and \eqref{3.5},\eqref{3.6}. In particular,
\begin{equation}\label{J3}
\hat\o_1+i\hat\o_2=(\hat{F}-i\hat{G})(\theta_1+i\theta_2).
\end{equation}
Now we turn to the key observation as below.\\

\textbf{Claim}:
The submanifold $[\hat{Y}]$ has $\mathrm{Span}_{\mathbb{R}}\{\xi_1,\xi_2\}$ as its mean curvature sphere.\\

To show this, under the induced metric $\langle d\hat{Y},d\hat{Y}\rangle=\sum_{j=1}^m\hat\o_j^2$
we take the orthonormal dual frame $\{\hat{E}_j\}_{j=1}^m$.
Then one can compute the Laplacian $\hat{\Delta}\hat{Y}$ so that we can determine the normal frames of $\hat{Y}$. Because the mean curvature sphere is determined by the subspace
\[
\mathrm{Span}_{\mathbb{R}}\{\hat{Y},\hat{Y}_j,\sum_{j=1}^m \hat{E}_j\hat{E}_j(\hat{Y})\}
=\mathrm{Span}_{\mathbb{R}}\{\hat{Y},\hat{Y}_j, \hat{\Delta}\hat{Y}\},
\]
it suffices to show $\langle\sum_{j=1}^m \hat{E}_j\hat{E}_j(\hat{Y}),\xi_1-i\xi_2\rangle=0,$
or equivalently,
\[
\langle \hat{Y},\sum_{j=1}^m \hat{E}_j\hat{E}_j(\xi_1-i\xi_2)\rangle=0.
\]
This is because $\langle\hat{Y},\xi_r\rangle=0=\langle d\hat{Y},\xi_r\rangle=\langle \hat{Y},d\xi_r\rangle$.
By \eqref{J2} and \eqref{J3}, up to some components orthogonal to $\hat{Y}$, we have equalities
\[
\sum_{j=1}^m\hat{E}_j\hat{E}_j(\xi_1-i\xi_2)
\thickapprox \sum_{j=1}^2\hat{E}_j\hat{E}_j(\xi_1-i\xi_2)
\thickapprox (\hat{E}_1-i\hat{E}_2)(\hat{E}_1+i\hat{E}_2)(\xi_1-i\xi_2)
\thickapprox 0.
\]
This completes the proof of the previous claim. Note that this is quite similar to the proof to Theorem~3.3 in \cite{XLMW}, or even simpler.

Finally, for $\hat{Y}$ we take its canonical lift, whose derivatives are clear to be combinations of $\hat{Y},\eta_1,\eta_2,\eta_a$. Its normal frame is just $\{\xi_1,\xi_2\}$ as we have shown. One reads from \eqref{3.3} and \eqref{3.4} that its M\"obius second fundamental form still take the same form as \eqref{3.1}. Thus it is a Wintgen ideal submanifold.
\end{proof}

\begin{remark}
In the proof above, one can choose a scaling of $\hat{Y}$ suitably so that $\hat{F}^2+\hat{G}^2=1$. Then one can verify that this $\hat{Y}$ is exactly the canonical lift.
It is straightforward to check that the projection
\[
\mathbb{S}^{m+2}\ni[\hat{Y}](p)~~\mapsto~~
\mathrm{Span}_{\mathbb{R}}\{\xi_1,\xi_2\}\in Gr(2,\mathbb{R}^{m+4}_1)
\]
is a Riemannian submersion (up to the factor $\mu=\sqrt{\frac{m-1}{4m}}$) from $\widehat{M}^m$ to the Riemann surface $\overline{M^2}$. This agrees with the conclusion (3) of Theorem~\ref{thm-converse}.
\end{remark}

\section{Relationship with minimal surfaces in $\mathbb{R}^{m+2}$ }

Dajczer and Tojeiro \cite{Dajczer2,Dajczer3} described another construction of almost all codimension two Wintgen ideal submanifolds via minimal surfaces in $\mathbb{R}^{m+2}$.
The following theorem gives a nice geometric correspondence from Wintgen ideal submanifolds to Euclidean minimal surfaces. This result was obtained explicitly in \cite{Rouxel, Dajczer2} and implicitly contained in the main theorem of \cite{Dajczer3}.

\begin{theorem}\label{thm-minimal}
Given a Wintgen ideal submanifold $x:M^m\to\mathbb{R}^{m+2}$.
Then the centers of its mean curvature sphere congruence form a two dimensional submanifold immersed in $\mathbb{R}^{m+2}$ which is a Euclidean minimal surface.
\end{theorem}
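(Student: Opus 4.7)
The plan is to reduce the statement to the Weierstrass representation of minimal surfaces. By Theorem~\ref{thm-envelop}, the mean curvature sphere congruence is encoded by a holomorphic, $1$-isotropic curve $[\xi]:\overline{M^2}\to Q^{m+2}_+$, and I will extract from it a holomorphic map $\Phi(\xi):\overline{M^2}\to\mathbb{C}^{m+2}$ with isotropic derivative whose real part is precisely the Euclidean center of the mean curvature sphere. Minimality will then fall out of the Weierstrass formula.

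The first step is to identify the center geometrically. Expanding $\langle Y(x),S\rangle=0$ for a spacelike $S=(S_0,S_1,\vec S)\in\mathbb{R}^{m+4}_1$ shows that $S$ represents the Euclidean hypersphere with center $\vec S/(S_0+S_1)$ and radius $\sqrt{\langle S,S\rangle}/|S_0+S_1|$. Using the explicit formula for $\xi_r$ from the Remark in Section~2, the combination $\xi=\xi_1-i\xi_2$ satisfies $\xi^0+\xi^1=H^1-iH^2$ and $\vec\xi=x(H^1-iH^2)+(n_1-in_2)$, whence
$$\Phi(\xi)\triangleq\frac{\vec\xi}{\xi^0+\xi^1}=x+\frac{n_1-in_2}{H^1-iH^2}=x+\frac{H}{|H|^2}+i\,\frac{n_1H^2-n_2H^1}{|H|^2}.$$
Its real part $\mathrm{Re}\,\Phi(\xi)=x+H/|H|^2$ is precisely the center of the mean curvature $m$-sphere at $x$, as one checks directly from its defining properties (tangent to $M^m$ at $x$ with the same mean curvature vector). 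Since $\Phi(\lambda\xi)=\Phi(\xi)$, the map descends from the local lift to $[\xi]$ and hence to $\overline{M^2}$.

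The crux of the proof is the verification that $\Phi$ is holomorphic with isotropic derivative, the subtle point being the transfer from Lorentz nullity/isotropy to Euclidean isotropy. Holomorphy is immediate from $\xi_{\bar z}\parallel\xi$. For $\Phi_z\cdot\Phi_z=0$ in the standard bilinear form on $\mathbb{C}^{m+2}$, set $S=\xi^0+\xi^1$ and $T=\xi^0-\xi^1$. The null condition $\langle\xi,\xi\rangle=0$ reads $\vec\xi\cdot\vec\xi=ST$, the $1$-isotropy $\langle\xi_z,\xi_z\rangle=0$ reads $\vec\xi_z\cdot\vec\xi_z=S_zT_z$, and differentiation of the first gives $\vec\xi\cdot\vec\xi_z=(ST_z+TS_z)/2$. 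Substituting into
$$S^4\,(\Phi_z\cdot\Phi_z)=S^2\,\vec\xi_z\cdot\vec\xi_z-2SS_z\,\vec\xi\cdot\vec\xi_z+S_z^2\,\vec\xi\cdot\vec\xi$$
and cancelling yields zero, as required. I expect this identity to be the main technical content; everything else is bookkeeping.

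Finally, the Weierstrass representation says that any holomorphic map into $\mathbb{C}^{m+2}$ with isotropic derivative has real part a weakly conformal harmonic map, hence a branched minimal immersion. Thus $c=\mathrm{Re}\,\Phi(\xi):\overline{M^2}\to\mathbb{R}^{m+2}$ is a minimal surface, with at worst isolated branch points at the discrete set where $\Phi_z$ vanishes. This is the asserted minimal surface traced out by the centers of the mean curvature spheres.
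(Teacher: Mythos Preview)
Your proof is correct and follows essentially the same route as the paper: both extract from $\xi$ a holomorphic map to $\mathbb{C}^{m+2}$ (what the paper calls the complex stereographic projection, \eqref{eq-stereo2}) whose real part is the center of the mean curvature sphere, verify that its $z$-derivative is isotropic using $\langle\xi,\xi\rangle=\langle\xi_z,\xi_z\rangle=0$, and conclude minimality via the Weierstrass representation. The only difference is packaging: you compute directly in the coordinates of the Remark in Section~2 and check the isotropy identity via the $(S,T,\vec\xi)$ algebra, whereas the paper works invariantly with the reflection formula $O_\xi=\wp-\langle\wp,\xi\rangle\bar\xi-\langle\wp,\bar\xi\rangle\xi$ and the normalized lift $\tilde\xi=\xi/\langle\xi,\wp\rangle$.
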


\begin{remark}\label{rem-minimal}
It is noteworthy that the statement of Theorem~\ref{thm-minimal} involves some kind of \emph{symmetry breaking}. The Wintgen ideal property and the mean curvature spheres are M\"obius invariant. On the other hand, the centers of those spheres as well as the minimal property depends on a choice of the ambient space metric.

To clarify this problem, consider a given Wintgen ideal submanifold $x:M^m\to\mathbb{S}^{m+2}$.
Assign an arbitrary point $p\in \mathbb{S}^{m+2}\subset \mathbb{R}^{m+3}$ as the north pole, $p=(1,0,\cdots,0)$ up to a choice of the coordinate system. Then take the stereographic projection
\begin{equation}\label{eq-stereo}
\mathbb{R}^{m+3}\supset\mathbb{S}^{m+2}\setminus\{p\}\ni x=(x',\vec{x}'')~\to~\frac{\vec{x}''}{1-x'}\in \mathbb{R}^{m+2},~~~~~x'\in \mathbb{R},~\vec{x}''\in \mathbb{R}^{m+2}.
\end{equation}
Since this is a conformal diffeomorphism, the image is still a Wintgen ideal submanifold, and the mean curvature spheres are mapped to mean curvature spheres. These $m$-spheres is a 2-parameter family according to Theorem~\ref{thm-envelop}.
Under this circumstance, Theorem~\ref{thm-minimal} is equivalent to saying that the centers of these spheres constitute a minimal surface in this ambient flat space. Moreover, no matter which $p\in \mathbb{S}^{m+2}$ is chosen to be $\infty$ (the point at infinity of an affine space $\mathbb{V}_p^{m+2}$), the corresponding locus of the centers of those mean curvature spheres is always a minimal surface in this $\mathbb{V}_p^{m+2}$.

Thus the striking feature of Theorem~\ref{thm-minimal} is that it describes a beautiful property under a \emph{symmetry breaking}, and by the reason of \emph{symmetry}, any of these ways to break symmetry yields the same result.
\end{remark}

Below we provide a new proof to Theorem~\ref{thm-minimal} according to the understanding of Remark~\ref{rem-minimal}.

\begin{proof}[Proof to Theorem~\ref{thm-minimal}]
Assign an arbitrary point $p=[\wp]\in \mathbb{S}^{m+2}$ to be the point at infinity, represented by the light-like vector
\[
\wp=(1,1,\vec{0}),~~~\vec{0}\in\mathbb{R}^{m+2}.
\]
Let $x:M^m\to\mathbb{S}^{m+2}$ be a Wintgen ideal submanifold with Gauss map $[\xi]=[\xi_1-i\xi_2]$. Without loss of generality we may suppose that locally these mean curvature spheres do not pass through $p$, or equivalently, that $\<\xi,\wp\>\ne 0$.
In the Euclidean space
\[\mathbb{V}^{m+2}_p\cong\mathbb{S}^{m+2}\setminus\{p\},
\]
the center of the mean curvature sphere $\mathrm{Span}_{\mathbb{R}}\{\xi_1,\xi_2\}$
is nothing but the inversive image of $p$ with respect to this round sphere. In the light-cone model, the center $[O_{\xi}]$ is the image of $[\wp]$ under the reflection with respect to the subspace $\mathrm{Span}_{\mathbb{R}}\{\xi_1,\xi_2\}$. This is written down explicitly as
\begin{equation}\label{eq-center}
O_{\xi}=\wp-2\<\wp,\xi_1\>\xi_1-2\<\wp,\xi_2\>\xi_2
=\wp-\<\wp,\xi\>\bar\xi-\<\wp,\bar\xi\>\xi,
\end{equation}
where $\bar\xi=\xi_1+i\xi_2$ is the complex conjugation of $\xi$.

To show that this is a minimal surface in $\mathbb{V}^{m+2}_p$, we need to write down the mapping to $\mathbb{R}^{m+2}$ explicitly. We re-write the classical stereographic projection \eqref{eq-stereo} as from the projective lightcone to $\mathbb{R}^{m+2}$:
\[
[\tilde{x}]=\left[\frac{1}{1-x'}(1,x',\vec{x}'')\right]=[1,x]~\to~
\frac{\vec{x}''}{1-x'}.
\]
This amounts to taking a lift of $x$ in the lightcone, denoted as $\tilde{x}$, such that
$\<\tilde{x},\wp\>=-1$, and then projecting $\tilde{x}$ to the orthogonal complement of $\{(1,1,\vec{0}),(1,-1,\vec{0})\}$.

Based on this observation, we need only to take two arbitrary points $p=[\wp],p*=[\wp*]\in\mathbb{S}^{m+2}$, which can always be expressed as
\[
\wp=(1,1,\vec{0}),~\wp^*=(1,-1,\vec{0}),~~~\<\wp,\wp^*\>=-2,
\]
with respect to a suitable Lorentz coordinate system.
The desired local lift $\tilde{x}$ of $O_{\xi}$ in \eqref{eq-center} is given by
\[
\tilde{x}=\frac{1}{\sigma} O_{\xi}=
\frac{1}{\sigma}\big(\wp-\<\wp,\xi\>\bar\xi-\<\wp,\bar\xi\>\xi\big),
\]
\[\sigma\triangleq -\<O_{\xi},\wp\>=-2\<\wp,\xi\>\<\wp,\bar\xi\>.
\]
And the explicit mapping to $\mathbb{R}^{m+2}$ is
\begin{align*}
\tilde{X}&=\tilde{x}+\frac{1}{2}\<\tilde{x},\wp\>\wp^*
+\frac{1}{2}\<\tilde{x},\wp^*\>\wp \\
&=-\frac{1}{2}\wp^*-\frac{1}{\<\wp,\xi\>}\xi
-\frac{1}{\<\wp,\bar\xi\>}\bar\xi
-\frac{\<\wp^*,\xi\>}{4\<\wp,\xi\>}\wp
-\frac{\<\wp^*,\bar\xi\>}{4\<\wp,\bar\xi\>}\wp\\
&=\frac{1}{2}(X+\bar{X}).
\end{align*}
This is exactly the real part of
\begin{equation}\label{eq-X}
X=\frac{-1}{2\langle\xi,\wp\rangle}
\left(2\xi+\langle\xi,\wp\rangle\wp^*
+\langle\xi,\wp^*\rangle\wp\right).
\end{equation}
This $X$ depends on the Gauss map $[\xi]$, which is a mapping from a Riemann surface $\overline{M^2}$ to $Q^{m+2}_+\subset \mathbb{C}P^{m+3}$ by Theorem~\ref{thm-envelop}. So one may regard this is a mapping
$X:\overline{M^2}\to \mathbb{C}^{m+4}_1$. We have the following conclusions:

First, this is indeed a mapping to $\mathbb{C}^{m+2}$ because
$\<X,\wp\>=\<X,\wp^*\>=0.$

Second, this complex vector-valued function is holomorphic. This follows from Theorem~\ref{thm-envelop} that $[\xi]$ is holomorphic, i.e., $\xi_{\bar{z}}=\lambda\xi$ for local coordinate $z$ and local function $\lambda$. Together with \eqref{eq-X}, it implies that $X_{\bar{z}}=0$.

Thirdly, $X_z$ is isotropic. We need only to re-write \eqref{eq-X} as
\[
X=-\frac{1}{2}\wp^*-\tilde\xi-\frac{1}{2}\<\tilde\xi,\wp^*\>\wp,
~~~\tilde\xi\triangleq \frac{1}{\<\xi,\wp\>}\xi.
\]
Since $\<\tilde\xi,\wp\>=1$ is constant, $\<\tilde\xi_z,\wp\>=0$. Moreover, for this codimension two Wintgen ideal submanifold, Theorem~\ref{thm-envelop} already tells us $\xi,\xi_z$ are isotropic. So $\tilde\xi_z$ is isotropic. As the consequence,
$X_z=\tilde\xi_z-(\cdots)\wp$ is isotropic. We also know $|X_z|^2>0$ because $|\xi_z|^2>0$ by Theorem~\ref{thm-envelop}.

From these three conclusions we know $X:\overline{M^2}\to \mathbb{C}^{m+2}$ is an isotropic, holomorphic vector-valued function. So its real part $\tilde{X}$ defines an immersed minimal surface in $\mathbb{R}^{m+2}$.
\end{proof}

In \cite{Dajczer3} the inverse procedure is also given, namely the construction of Wintgen ideal submanifolds of codimension two from Euclidean minimal surfaces.

Instead of repeating their description at here, we will give an interpretation of this relationship between these two classes of geometric objects. By our main results in the previous section, the first class (Wintgen ideal $M^m\to\mathbb{S}^{m+2}$) is essentially the same as holomorphic 1-isotropic curves in $Q^{m+2}_+$. On the other hand, the second class (minimal $\overline{M^2}\to \mathbb{R}^{m+2}$) is well-known to be identical with holomorphic 1-isotropic curves in $\mathbb{R}^{m+2}$. Thus it suffices to establish a correspondence between these two classes of holomorphic 1-isotropic curves.

The proof of Theorem~\ref{thm-minimal} already included an explicit correspondence between $Q^{m+2}$ and $\mathbb{C}^{m+2}\subset \mathbb{C}^{m+4}_1$ as below:
\begin{equation}\label{eq-stereo2}
\pi:[\xi] ~\mapsto~ X
=\frac{-1}{2\langle\xi,\wp\rangle}
\left(\langle\xi,\wp\rangle\wp^*
+\langle\xi,\wp^*\rangle\wp+2\xi\right),
\end{equation}
where we have fixed two lightlike directions $[\wp],[\wp^*]$ satisfying $\langle\wp,\wp^*\rangle=-2$. More precisely, the domain of $\pi$ is an open dense subset of $Q^{m+2}$ where $\langle\xi,\wp\rangle\ne 0$;
the image is the orthogonal complement of $\{\wp,\wp^*\}$.

To find the inverse of $\pi$, put $\wp,\wp^*$ as before and $X=(0,0,X_1,\cdots,X_m)\in\mathbb{C}^{m+2}\subset \mathbb{C}^{m+4}_1$. The inverse $\pi^{-1}$ is then given by
\begin{equation}\label{eq-stereo3}
\pi^{-1}:X \mapsto [\xi],~~~\xi=
\wp^*+\langle X,X\rangle\wp+2X.
\end{equation}
It is easy to verify that the $\xi=\xi_1-i\xi_2$ given above satisfies
\begin{equation}\label{eq-xi}
\<\xi,\xi\>=0,~~\<\xi,\bar\xi\>
=4\<X,\bar{X}\>-2\<\xi,\xi\>-2\<\bar\xi,\bar\xi\>=4|\xi_2|^2.
\end{equation}
So $[\xi]$ defined above is in $Q^{m+2}$ as desired. By the assumption $\<X,\wp\>=\<X,\wp^*\>=0,\<\wp,\wp^*\>=-2$, it is straightforward to show that \eqref{eq-stereo3} and \eqref{eq-stereo2} are inverse mappings to each other.

Indeed \eqref{eq-stereo2} is a complex version of the classical stereographic projection. Take $\wp=(1,1,\vec{0}),~\wp^*=(1,-1,\vec{0})$ and the lift $\xi=(1,\xi',\vec{\xi}'')$. Then \eqref{eq-stereo2} and \eqref{eq-stereo3} read as
\[
\pi:(1,\xi',\vec{\xi}'')~\mapsto~(0,0,\frac{\vec{\xi}''}{1-\xi'}),
\]
\[
\pi^{-1}:(0,0,X)~\mapsto~(|X|^2+1,|X|^2-1,2X).
\]
These formulas are similar to the classical stereographic projection. In particular, when $X\in\mathbb{R}^{m+2}$ we get the old version between the projective lightcone and the Euclidean space.

\begin{theorem}\label{thm-correspondence}
Fix $\wp=(1,1,\vec{0}),\wp^*=(1,-1,\vec{0})\in\mathbb{R}^{m+2}$. Then
the complex stereographic projection in \eqref{eq-stereo2} and its inverse \eqref{eq-stereo3} establish a correspondence between holomorphic 1-isotropic curves in $Q^{m+2}_+\subset\mathbb{C}P^{m+3}_1$ and holomorphic 1-isotropic curves in $\mathbb{C}^{m+2}$. This is a one-to-one correspondence up to the choice of the poles $\wp,\wp^*$.
\end{theorem}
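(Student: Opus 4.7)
The plan is to verify that the maps $\pi$ in \eqref{eq-stereo2} and $\pi^{-1}$ in \eqref{eq-stereo3}, which have already been shown to be mutually inverse on their natural domains, both preserve the properties of being holomorphic and of being 1-isotropic. Since the bijection itself is already in place, the remaining content of the theorem reduces to two separate property-preservation checks.

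For the forward direction (from $Q^{m+2}_+$ to $\mathbb{C}^{m+2}$), nothing new is needed: the three observations at the end of the proof of Theorem~\ref{thm-minimal}, namely that $X=\pi([\xi])$ lies in $\mathbb{C}^{m+2}$, is holomorphic in the local coordinate $z$, and satisfies $\<X_z,X_z\>=0$ with $|X_z|^2>0$, are exactly what it means for $X$ to define a holomorphic 1-isotropic curve in $\mathbb{C}^{m+2}$. I would simply cite this portion of the earlier argument.

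For the converse, let $X:\overline{M^2}\to\mathbb{C}^{m+2}$ be holomorphic and 1-isotropic. Using the explicit lift $\xi=\wp^*+\<X,X\>\wp+2X$ from \eqref{eq-stereo3}, I would verify in turn: (i) $\<\xi,\xi\>=0$ and $\<\xi,\bar\xi\>>0$, so that $[\xi]\in Q^{m+2}_+$ (the positivity is already recorded in \eqref{eq-xi}, while $\<\xi,\xi\>=0$ is a direct expansion using $\<\wp,\wp\>=\<\wp^*,\wp^*\>=0$, $\<\wp,\wp^*\>=-2$, $\<X,\wp\>=\<X,\wp^*\>=0$); (ii) $\xi_{\bar z}=0$, which is immediate since $\wp,\wp^*$ are constant and $X_{\bar z}=0$ forces $\<X,X\>_{\bar z}=0$, so in particular $[\xi]$ is holomorphic; (iii) from $\xi_z=2\<X,X_z\>\wp+2X_z$ one expands $\<\xi_z,\xi_z\>$, and the $\wp$-terms drop out thanks to $\<\wp,\wp\>=0$ and $\<\wp,X_z\>=0$, leaving $4\<X_z,X_z\>=0$ by 1-isotropicity of $X$.

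There is no real obstacle: the whole argument is a short algebraic check using the pseudo-orthogonality of $\wp,\wp^*$ and $X$. The one subtlety worth flagging is the phrase \emph{up to the choice of the poles}: admissible pairs $(\wp,\wp^*)$ with $\<\wp,\wp^*\>=-2$ are interchanged by a Lorentz isometry of $\mathbb{R}^{m+4}_1$, so the correspondence obtained is canonical only modulo this ambient symmetry, which is exactly how the statement is phrased.
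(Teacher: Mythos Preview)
Your proposal is correct and matches the paper's own proof essentially step for step: the forward direction is handled by citing the argument at the end of Theorem~\ref{thm-minimal}, and the converse is the same short algebraic verification from $\xi=\wp^*+\<X,X\>\wp+2X$, yielding $\xi_{\bar z}=0$, $\<\xi,\xi\>=0$, and $\<\xi_z,\xi_z\>=0$ via $\xi_z=2\<X,X_z\>\wp+2X_z$. Your added remark on the phrase ``up to the choice of the poles'' is a welcome clarification that the paper leaves implicit.
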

\begin{proof}
Let $[\xi]:\overline{M^2}\to Q^{m+2}_+$ be a holomorphic 1-isotropic curve. The conclusion that $\pi[\xi]=X$ is holomorphic and 1-isotropic is proved almost the same as that of Theorem~\ref{thm-minimal}.
Conversely, given a vector-valued function
\[
X=(0,0,X_1,\cdots,):\overline{M^2}\to\mathbb{C}^{m+2}\subset \mathbb{C}^{m+4}_1,
\]
which is holomorphic and 1-isotropic, i.e., $X_{\bar{z}}=0,\<X_z,X_z\>=0$.
Then $\xi=\wp^*+\langle X,X\rangle\wp+2X$ defined by \eqref{eq-stereo3} obviously satisfy $\xi_{\bar{z}}=0,\<\xi,\xi\>=0$.
From $\xi_z=2\<X_z,X\>\wp+2X_z$ it is 1-isotropic. This finishes the proof.
\end{proof}

\begin{remark}
By Theorem~\ref{thm-minimal} and Theorem~\ref{thm-correspondence}, Wintgen ideal submanifolds of codimension two are constructed from two equivalent geometric objects, i.e., holomorphic 1-isotropic curves in $Q^{m+2}_+$ or in $\mathbb{C}^{m+2}$. The first description is given by us. It has the advantage of being invariant under the M\"obius transformations. Combined with Thereom~\ref{thm-envelop} and Theorem~\ref{thm-converse}, it captures the global structure of a Wintgen ideal submanifold of codimension two. From another point of view \cite{Dajczer3}, minimal surfaces in $\mathbb{R}^{m+2}$ and holomorphic curves in $\mathbb{C}^{m+2}$ are easy to describe explicitly, which would enable us to construct examples of Wintgen ideal submanifolds efficiently.
\end{remark}

\vspace{5mm} \noindent Tongzhu Li,
{\small\it Department of Mathematics, Beijing Institute of
Technology, Beijing 100081, People's Republic of China.
e-mail:{\sf litz@bit.edu.cn}}

\vspace{5mm} \noindent Xiang Ma,
{\small\it School of Mathematical Sciences, Peking University,
Beijing 100871, People's Republic of China.
e-mail: {\sf maxiang@math.pku.edu.cn}}

\vspace{5mm} \noindent Changping Wang
{\small\it School of Mathematics and Computer Science,
Fujian Normal University, Fuzhou 350108, People's Republic of China.
e-mail: {\sf cpwang@fjnu.edu.cn}}

\vspace{5mm} \noindent Zhenxiao Xie,
{\small\it School of Mathematical Sciences, Peking University,
Beijing 100871, People's Republic of China.
e-mail: {\sf xiezhenxiao@126.com}}

\end{document}